\newcommand{\ddt}{\partial_t}
\newcommand{\ddx}{\partial_x}
\newcommand{\R}{\mathbb{R}}
\newtheorem{assumption}{Assumption}
\newtheorem{theorem}{Theorem}[section]
\newtheorem{lemma}[theorem]{Lemma}
\newtheorem{remark}[theorem]{Remark}
\providecommand{\keywords}[1]{\textit{Keywords:} #1}
\providecommand{\msc}[1]{\textit{2020 MSC:} #1}
\pgfplotsset{compat=1.18}
\pgfplotsset{plot coordinates/math parser=false}
\begin{document}
%% \def\leftmark{Session title}
%%
%%    The information for the title page will be placed between
%%    \begin{document} and \maketitle. The order of most entries
%%    is determined by the class file and can not be changed by
%%    rearranging them. The maketitle command follows after the
%%    abstract.
%%
%%    Most of the following commands will be completed by the publisher.
%%
%%    \renewcommand{\copyrightyear}{2016}
%%    \DOIsuffix{pamm.20161zzzz}
%%    \Volume{16} 
%%    \Year{2016} 
%%    \pagespan{1}{}
%%
%%    The short title is optional:
\title{Influx ratio preserving coupling conditions for the networked Lighthill--Whitham--Richards model}
\author{
  Niklas~Kolbe
}

\date{
  \small
  Institute of Geometry and Applied Mathematics,\\ RWTH Aachen University, Templergraben 55,\\ 52062 Aachen, Germany\\
   \smallskip
   {\tt kolbe@igpm.rwth-aachen.de} \\
   \smallskip
   \today
 }

\maketitle                   % Produces the title.

\abstract{
A new coupling rule for the Lighthill–Whitham–Richards model at merging junctions is introduced that imposes the preservation of the ratio between inflow from a given road to the total inflow into the junction. This rule is considered both in the context of the original traffic flow model and a relaxation setting giving rise to two different Riemann solvers that are discussed for merging 2-to-1 junctions. Numerical experiments are shown suggesting that the relaxation based Riemann solver is capable of suitable predictions of both free-flow and congestion scenarios without relying on flow maximization.  
}

\msc{35L65, 90B10, 90B20}

\keywords{Macroscopic traffic flow models, coupling conditions, hyperbolic conservation laws}

\section{Introduction}
Traffic flow has been described by fluid dynamics-like macroscopic models since the 1950s. The modeling of road networks by means of hyperbolic equations coupled on a graph has only been an active research field in recent years, see e.g.~\cite{bressan2014flowsnetwor, coclite2005traffflowroadnetwor}. The modeling of the junctions requires that a certain set of rules is fixed, such that both, the coupling problem is well posed and typical driver behavior is reflected~\cite{garavello2006traffflownetwor}. Recently, also data-driven approaches to the modeling of junctions have been considered~\cite{herty2022datadrivenmodel}. Moreover, relaxation, see e.g. \cite{borsche2018}, and kinetic approaches \cite{borsche2018kinet} have been used to derive suitable coupling conditions. In this work we consider a new coupling condition for merging junction and embed it in a networked traffic model and a relaxation model.

We model road networks using a directed graph with nodes representing the junctions and edges representing the roads. On each edge the Lighthill-–Whitham–-Richards (LWR) model
\begin{equation}\label{eq:lwr}
  \ddt \rho^k + \ddx \left(\rho^k V^k(\rho^k) \right) = 0\ \qquad \text{in } (0, \infty) \times \mathcal{E}_k 
\end{equation}
proposed in \cite{lighthill1955kinemwavesii, richards1956shockwaveshighw} determines the time and space evolution of the vehicle density $\rho$. In \eqref{eq:lwr} we indicate the corresponding edge by $k$ and assume a parametrization in physical space over the interval $\mathcal{E}_k\subset \R$. For simplicity, we limit this study to networks consisting of a single node/junction. This allows us to partition all edges within a set of incoming and a set of outgoing edges and hence to use the notations $\delta^- \sqcup \delta^+ = \delta^\mp = \{1, 2, \dots, m \}$. On each road $k$ the velocity $V^k$ is a function of the vehicle density, a relation referred to as fundamental diagram. It is typically chosen such that the flux $f^k(\rho^k) \coloneqq \rho^k V^k(\rho^k)$ is a strictly concave function in $\rho^k$. A classical choice is due to \cite{greenshields1935} and assumes the linear relation
\begin{equation}\label{eq:greenshields}
V^k(\rho^k) = v^k_\text{max} \left(1 - \frac{\rho^k}{\rho^k_\text{max}} \right)
\end{equation}
for suitable parameters $\rho^k_\text{max}$ and $v^k_\text{max}$ representing the stagnation density and the maximal velocity, respectively. For a better fit to measurement data on the roads various more detailed fundamental diagrams that require further parameters have been proposed, see e.g.,~\cite{newell1993ii}.

\section{Coupling conditions for the networked Lighthill–-Whitham-–Richards model}\label{sec:cc}
In this section we recall common coupling conditions for the network model \eqref{eq:lwr} at the node and propose a new one for merging roads.

\paragraph{Mass conservation} We assume initial data $\rho_0^k$ for $k \in \delta^\mp$ being given. To define weak solutions we introduce the smooth test functions $\Phi_k: (0, \infty) \times \mathcal{E}_k\to \R$ for all roads $k \in \delta^\mp$ that are smooth across the junction, i.e., it holds $\Phi_k(\cdot, b_k) = \Phi_\ell(\cdot, a_\ell)$ for $k\in \delta^-$ and $\ell \in \delta^+$ assuming that the road parametrizations take the form $\mathcal{E}_k=(a_k,b_k)$. A weak solution $(\rho^k)_{k \in \delta^\mp}$ then must satisfy
\begin{equation}\label{eq:kirchhoff}
  \int_0^T \int_{\mathcal{E}_k} \rho^k \ddt \Phi_k + \rho^k V^k(\rho^k) \, dx \, dt = \int_{\mathcal{E}_k} \rho^k_0 \Phi^k(0, x) \, dx.
\end{equation}
This weak formulation implies that the mass needs to be conserved in the junction, see \cite{holden1995}. The resulting condition is known as \emph{Kirchhoff conditions} and in general reads
\begin{equation}
  \sum_{k\in \delta^-} f^k(\rho^k(t, b_k^-)) = \sum_{\ell \in \delta^+} f^\ell(\rho^\ell(t, a_\ell^+)) \qquad \text{for a.~e. }t>0
\end{equation}
at the junction. Here the terms $b_k^-$ and $a_\ell^+$ refer to the limit from the left and the right as $x$ approaches the corresponding boundary of the road.

\paragraph{Outgoing roads} To determine the traffic on the outgoing roads, a traffic distribution matrix with entries $\{ a_{\ell k}\}_{\ell \in \delta^+, k \in \delta^-}$ is fixed, which satisfy the property
\[
  \sum_{\ell \in \delta^+} \alpha_{\ell k} = 1 \qquad \forall k \in \delta^-,
\]
see e.g., \cite{coclite2005traffflowroadnetwor}. The entry $a_{\ell k}$ determines the ratio of the inflow from road $k$ that is distributed to the outgoing road $\ell$ and consequently the nodal conditions
\begin{equation}\label{eq:outgoing}
 % f^k(\rho^k(t, b_k^-)) = \sum_{\ell\in \delta+} \alpha_{\ell k}f^{\ell}(\rho^k(t, a_\ell^+)) \qquad \forall k \in \delta^-
  q_{\ell k}(t)  = \alpha_{\ell k} f^k(\rho^k(t, b_k^-)) \qquad \text{for a.~e. }t>0 \text{ and } \forall k \in \delta^-, \ell \in \delta^+,
\end{equation}
where $q_{\ell k}$ denotes the flow from road $k$ into road $\ell$ so that
\[
  \sum_{k \in \delta^-} q_{\ell,k}(t) = f^{\ell}(\rho^\ell(t, a_\ell^+)) \qquad \forall \ell \in \delta^-, \qquad \sum_{\ell \in \delta^+} q_{\ell,k}(t) = f^{k}(\rho^k(t, b_k^+)) \quad  \forall k \in \delta^+.
\]
We note that for well-posedness of the coupling problem the distribution matrix needs to satisfy some technical conditions, see \cite{garavello2006traffflownetwor} for details.

\paragraph{Incoming roads}
Similar to outgoing roads, the states on the outgoing roads can be determined by imposing right of way parameters to which the outgoing fluxes are to be proportional \cite{gottlich2021seconordertraff, garavello2006traffflownetwor}. In this work we propose another rule that considers the ratio between the influx from a selected road to the total influx, i.e.,
\[
  r^k(t, x) = \frac{ f^k(\rho^k(t, x))}{ \sum_{k\in \delta^-} f^k(\rho^k(t, x)) }
\]
Assuming a positive total influx $\sum_{k\in \delta^-} f^k(\rho^k(t, b_k))$ into the junction we stipulate a vanishing first spatial derivative of this influx ratio at the interface:
\begin{equation}\label{eq:influxratiopreservation}
  \ddx r^k(t, b_k) = 0 \quad \forall k \in \delta^-. 
\end{equation}
As this condition reflects a preservation of the influx ratio at the junction it can be compared to dynamic right of way parameters that are inherited from the incoming streets. Note that, as argued in \cite{herty2023centr}, condition \eqref{eq:influxratiopreservation} needs to be imposed for only $|\delta^-| - 1$ roads; the condition for the final road then follows from a summation argument.

Since we only allow for nonnegative flows continuity of the zero-influxes is imposed in the case that the total influx is zero.

\paragraph{Demand and supply conditions}
To obtain admissible entropy solutions of the conservation law \eqref{eq:lwr} a half Riemann problem needs to be solved on each road. In case of the networked LWR model this is equivalent to the \emph{demand and supply conditions} proposed in \cite{lebacque1996godun} that read
\begin{equation}\label{eq:ds}
  \begin{aligned}
    0 \leq f^k(\rho^k(t, b_k^-)) &\leq d^k(\rho^k(t, b_k^-)) \quad \forall k \in \delta^-, \\
    0 \leq f^\ell(\rho^\ell(t, a_\ell^+)) &\leq s^\ell(\rho^\ell(t, a_\ell^+)) \quad \forall \ell \in \delta^+
  \end{aligned}
  \qquad \text{for a.~e. }t>0.
\end{equation}
Denoting on all roads the unique maximizer of $V^k$ by $\sigma^k$ demand and supply are given by the functions
\[
  d^k(\rho^k) = \begin{cases*}
    \rho^k V^k(\rho^k) & if $\rho^k\leq \sigma^k$\\
    \sigma^k V^k(\sigma^k) & if $\rho^k> \sigma^k$
  \end{cases*}
  \quad \forall k \in \delta^-, \qquad
  s^\ell(\rho^\ell) = \begin{cases*}
    \sigma^\ell V^\ell(\sigma^\ell) & if $\rho^\ell\leq \sigma^\ell$ \\
    \rho^\ell V^\ell(\rho^\ell) & if $\rho^\ell> \sigma^\ell$
  \end{cases*}
  \quad \forall k \in \delta^-.
\]
Condition \eqref{eq:ds} gives rise to an admissible region of solutions. Out of all admissible solution the one that maximizes the total flow in the junction is selected, i.e., the maximizer of
\begin{equation}\label{eq:maxflow}
\max   \sum_{k\in \delta^-} f^k(\rho^k(t, b_k^-)) 
\end{equation}
constrained by the nodal conditions in this section.
\begin{remark}
The flow maximization \eqref{eq:maxflow} as well as the demand and supply conditions \eqref{eq:ds} are not required for the relaxation based coupling introduced in the next section. From a numerical point of view this reduces the complexity of the nodal solver as no optimization problem needs to be solved.
\end{remark}

\section{A relaxation based Riemann solver}\label{sec:relax}
Following the approach in \cite{herty2023centrschemtwo} we formulate the Jin-Xin relaxation system \cite{jin1995relaxschemsystem} for the networked model \eqref{eq:lwr} and obtain 
\begin{equation}\label{eq:relaxation}
  \begin{aligned}
    \ddt \rho^k_{\varepsilon} + \ddx v_{\varepsilon}^k &= 0, \\
    \ddt v_{\varepsilon}^k + \lambda^k \ddx \rho^k_{\varepsilon} &= \frac{1}{\varepsilon}(v_{\varepsilon}^k - \rho^k_\varepsilon V^k(\rho^k_{\varepsilon})).
  \end{aligned}
\end{equation}
In this system the flux of the original system is replaced by the new variable $v_\varepsilon$, which in turn is given by a balance law that involves the relaxation rate $\varepsilon$ and the relaxation speed $\lambda^k$. In the limit $\varepsilon \to 0$ the original system \eqref{eq:lwr} is recovered. We follow the approach in \cite{herty2023centrschemtwo} and construct a nodal solver for \eqref{eq:relaxation}, which after taking the system to the relaxation limit serves as a nodal solver for the original system \eqref{eq:lwr}.  

A Riemann solver at the junction is a map of the form
\begin{equation}\label{eq:rs}
\mathcal{RS}: (\rho_o^1, v_0^1, \dots, \rho_o^m, v_0^m) \mapsto (\rho_R^1, v_R^1, \dots, \rho_L^m, v_L^m), \qquad m = |\delta^-| + |\delta^+|
\end{equation}
assigning trace data near the junction from a numerical solution to coupling data that serves as Dirichlet boundary data in the following time step. The coupling data is chosen such that it solves $m$ half Riemann problems with respect to system \eqref{eq:relaxation} and the trace data. Due to the simple structure of the relaxation system it follows that on all roads the trace and coupling states are connected via the linear relations
\begin{equation}
  \rho^k_R =\rho^k_0 - \sigma^k, \quad v^k_R = v^k_0+ \sigma^k \lambda \quad \forall k \in \delta^-, \qquad  \rho^\ell_L =\rho^\ell_0 + \sigma^\ell, \quad v^k_R = v^k_0+ \sigma^k \lambda \quad \forall \ell \in \delta^+
\end{equation}
for some parameters $\sigma^k, \sigma^\ell \in \R$. Thus there are $m$ degrees of freedom that are determined by adopting the coupling conditions from Section \ref{sec:cc} as follows.

We impose the Kirchhoff conditions \eqref{eq:kirchhoff} for the relaxation states $\rho^k_\varepsilon$ and note that the consistency with the networked LWR model, see \cite{herty2023centrschemtwo}, implies a similar condition for the states $v^k$. Together, the following conditions are set.
\begin{align}
  \sum_{k\in \delta^-} \rho^k_R V^k(\rho^k_R) &= \sum_{\ell \in \delta^+} \rho^\ell_L V^\ell(\rho^\ell_L), \label{eq:relkirchhoffu}\\
   \sum_{k\in \delta^-} v^k_R &= \sum_{\ell \in \delta^+} v_L^\ell.\label{eq:relkirchhoffv}
\end{align}
In addition, we impose condition \eqref{eq:outgoing} for all $\ell \in \delta^+$ and condition \eqref{eq:influxratiopreservation} for all $k \in \delta^-$. As the states $v_\varepsilon^k$ represent the fluxes in the relaxation limit these conditions are imposed substituting
\begin{equation}
  f^\ell(\rho^\ell(t, a_\ell^+)) = v_L^\ell \quad \forall \ell \in \delta^+, \qquad  f^k(\rho^k(t, b_k^+)) = v_R^k \quad \forall k \in \delta^-. 
\end{equation}
It is easy to verify that in total $m$ independent conditions are obtained. Due to \eqref{eq:relkirchhoffu} an overall nonlinear system in the parameters $\sigma^1, \dots, \sigma^m$ is obtained. If this system has a unique solution in $\R^m$ this solution determines the coupling data of \eqref{eq:rs}. In case of multiple solutions in $\R^m$ the one that minimizes the distance from the trace data, i.e. $|\sum_{k\in \delta^{\mp}}(\sigma^k)^2|$ is selected. By its design the new Riemann solver clearly is consistent as it satisfies
\[
  \mathcal{RS}(\rho_R^1, v_R^1, \dots, \rho_L^m, v_L^m) = (\rho_R^1, v_R^1, \dots, \rho_L^m, v_L^m)
\]
if coupling data is already given by $\rho_R^1, v_R^1, \dots, \rho_L^m, v_L^m$.

\section{Merging 2-to-1 junctions}
In this section we consider the case of a junction with two incoming and a single outgoing road as shown in Fig.~\ref{fig:2to1}. We assume that on each of the roads the velocity is given by a fundamental diagram of the form \eqref{eq:greenshields}. We study an application of the Riemann solver derived in Section~\ref{sec:relax} to this case in Section \ref{sec:2to1relax} and also provide a Riemann solver for the original networked LWR model in Section \ref{sec:2to1rs}.

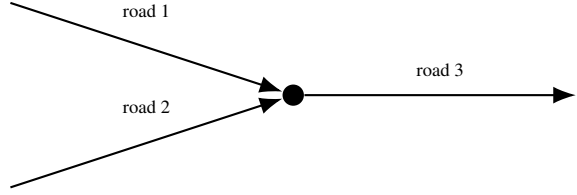
\begin{figure}[h]
  \centering
\begin{tikzpicture}[x=\linewidth/47,y=\linewidth/39] \scriptsize %[x=\linewidth/22,y=\linewidth/22]
  \node[shape=circle, fill=black, inner sep=0pt, minimum size=8pt] (N) at (0,0) {};
  \draw[-{Latex[length=3mm]}, thick] (-11,3) -- node[above=.3cm] {road 1} ++ (N);
  \draw[-{Latex[length=3mm]}, thick] (-11,-3) -- node[above=.3cm] {road 2} ++ (N);
  \draw[-{Latex[length=3mm]}, thick] (N) -- node[above=.15cm] {road 3} ++ (11, 0);
\end{tikzpicture}
\caption{Merging junction with two incoming and one outgoing road.}\label{fig:2to1}
\end{figure}
\subsection{The relaxation based Riemann solver at merging junctions}\label{sec:2to1relax}
The Riemann solver \eqref{eq:rs} at the merging junction in Fig.~\ref{fig:2to1} assigns the states $\rho^1_R$, $v^1_R$, $\rho^2_R$, $v^2_R$, $\rho^3_L$ and $v_L^3$ to the data $\rho^1_0$, $v_0^1$, $\rho^2_0$, $v^2_0$, $\rho^3_0$ and $v_0^3$. Let $G_0 \coloneqq \rho^1_0 V^1(\rho^1_0) + \rho^1_0 V^2(\rho^2_0) - \rho^3_0 V^3(\rho^3_0)$ and $G_1 \coloneqq v_0^1 + v_0^2 - v_0^3$ then the conditions \eqref{eq:relkirchhoffu} and \eqref{eq:relkirchhoffv} recast as
\begin{align}
  G_0 + d_1(\sigma^1) + d_2(\sigma^2) &= d_3(\sigma^3), \label{eq:2to1relu}\\
 \lambda \left( \sigma^3 - \sigma^1 - \sigma^2 \right) & = G_1, \label{eq:2to1relv}
\end{align}
where in \eqref{eq:2to1relu} we employ the expressions
\[
  d_k(\sigma^k) = (\rho_0^k-\sigma^k)V^k(\rho_0^k-\sigma^k) - \rho_0^k V^k(\rho_0^k)\quad k \in \{1,2\}, \qquad d_3(\sigma^3) = (\rho_0^3+\sigma^3)V^3(\rho_0^3+\sigma^3) - \rho_0^3 V^3(\rho_0^3).
\]
In addition, we obtain from the influx ratio preservation \eqref{eq:influxratiopreservation} the relation
\begin{equation}\label{eq:2to1influxpreservation}
\frac{v_0^2}{v_0^1 + v_0^2} \sigma^1 = \frac{v_0^1}{v_0^1 + v_0^2} \sigma^2
\end{equation}
assuming $v_0^1 + v_0^2 >0$. Equations \eqref{eq:2to1influxpreservation} and \eqref{eq:2to1relv} allow us to write both $\sigma^1$ and $\sigma^2$ as linear functions of $\sigma^3 \eqqcolon \sigma$ and the system reduces to the nonlinear equation
\begin{equation}\label{eq:2to1final}
  G_0 + d_1\left( \frac{r_1}{\lambda}( \lambda \sigma - G_1)\right) + d_2\left( \frac{r_2}{\lambda}( \lambda \sigma - G_1) \right) = d_3(\sigma), \qquad r_1 =  \frac{v_0^1}{v_0^1 + v_0^2}, \quad r_2 =  \frac{v_0^2}{v_0^1 + v_0^2}.
\end{equation}

The following results discuss the solvability of \eqref{eq:2to1final}. If solutions exists the one with minimal absolute value determines the coupling data of the Riemann solver.
\begin{lemma}\label{lemma}
  Suppose that $v_0^1 + v_0^2>0$ as well as the bounds
  \begin{align}
 r_1^2 \frac{G_1^2 \, v^1_{max}}{\lambda^2 \, \rho^1_{max}} + r_2^2 \frac{G_1^2 \, v^2_{max}}{\lambda^2 \, \rho^2_{max}}  &\leq G_0 + \frac{G_1}{\lambda} \left(r_1 (f^1)^\prime(\rho^1) + r_2 (f^2)^\prime(\rho^2) \right), \label{eq:lemc1}\\
\frac{v^3_{max}}{\rho^3_{max}} &\leq  \frac{v^1_{max}}{\rho^1_{max}} \, r_1^2  + \frac{v^2_{max}}{\rho^2_{max}} \, r_2^2   \label{eq:lemc2}
   \end{align}
   hold then \eqref{eq:2to1final} has at least one real solution. The same consequence follows if the converse inequalities of both, \eqref{eq:lemc1} and \eqref{eq:lemc2} hold.
 \end{lemma}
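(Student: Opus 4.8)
The plan is to observe that, under the Greenshields fundamental diagram \eqref{eq:greenshields}, all flux functions are quadratic, so that \eqref{eq:2to1final} is nothing but the requirement that a polynomial of degree at most two vanish; the two pairs of inequalities in the statement are designed precisely to control the sign of its leading coefficient and of its value at $\sigma=0$, after which the intermediate value theorem produces a root.

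The first step is a direct computation. Substituting $f^k(\rho)=v^k_{max}\rho-\tfrac{v^k_{max}}{\rho^k_{max}}\rho^2$ into the definitions of $d_1,d_2,d_3$ and expanding gives the closed forms
\[
  d_k(s)=-(f^k)'(\rho_0^k)\,s-\frac{v^k_{max}}{\rho^k_{max}}\,s^2\quad(k\in\{1,2\}),\qquad d_3(s)=(f^3)'(\rho_0^3)\,s-\frac{v^3_{max}}{\rho^3_{max}}\,s^2 .
\]
Inserting the affine substitutions $s=\tfrac{r_k}{\lambda}(\lambda\sigma-G_1)=r_k\sigma-\tfrac{r_kG_1}{\lambda}$ into $d_1$ and $d_2$ and collecting powers of $\sigma$, the left-hand side minus the right-hand side of \eqref{eq:2to1final} becomes a polynomial $F(\sigma)=A\,\sigma^2+B\,\sigma+C$ of degree at most two. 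The point of the calculation is that the leading and constant coefficients turn out to be
\[
  A=\frac{v^3_{max}}{\rho^3_{max}}-r_1^2\,\frac{v^1_{max}}{\rho^1_{max}}-r_2^2\,\frac{v^2_{max}}{\rho^2_{max}},\qquad
  C=F(0)=G_0+\frac{G_1}{\lambda}\Bigl(r_1(f^1)'(\rho^1)+r_2(f^2)'(\rho^2)\Bigr)-r_1^2\,\frac{G_1^2\,v^1_{max}}{\lambda^2\,\rho^1_{max}}-r_2^2\,\frac{G_1^2\,v^2_{max}}{\lambda^2\,\rho^2_{max}} ,
\]
so that hypotheses \eqref{eq:lemc1} and \eqref{eq:lemc2} say exactly $F(0)\ge 0$ and $A\le 0$, while their converses say exactly $F(0)\le 0$ and $A\ge 0$.

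With these identifications the conclusion follows quickly. If $A\neq 0$ then $F$ is a genuine quadratic with $F(\sigma)\to-\infty$ (resp.\ $+\infty$) as $|\sigma|\to\infty$ when $A<0$ (resp.\ $A>0$); under \eqref{eq:lemc1}--\eqref{eq:lemc2} one has $A<0$ and $F(0)\ge 0$, hence $F$ changes sign on $[0,\infty)$ and the intermediate value theorem yields a real root, and symmetrically under the converse hypotheses $A>0\ge F(0)$ again forces a root. If $A=0$ the function $F$ is affine and, as soon as its slope $B$ is nonzero, has a (unique) real root irrespective of the sign of $C$; the only configuration not reached by this argument is the degenerate one in which $B$ vanishes as well and $F$ collapses to a nonzero constant, a non-generic borderline case that would need to be handled separately or excluded by a genericity assumption.

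I expect the main effort to lie in the bookkeeping of the second step: carefully composing the quadratics $d_1,d_2$ with the affine maps $\sigma\mapsto r_k\sigma-\tfrac{r_kG_1}{\lambda}$, collecting the $\sigma^2$-, $\sigma^1$- and $\sigma^0$-terms, and verifying term by term that $A$ and $C$ match the right- and left-hand sides of \eqref{eq:lemc2} and \eqref{eq:lemc1} respectively --- in particular that the $G_1^2$-contributions carry the factors $v^k_{max}/(\lambda^2\rho^k_{max})$ and that the mixed terms assemble into the derivatives $(f^k)'(\rho^k)$. Once the coefficients are pinned down the sign/IVT argument is entirely routine, the only subtlety being the borderline affine/constant case noted above.
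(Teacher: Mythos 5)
Your proof is correct and follows essentially the same route as the paper: both reduce \eqref{eq:2to1final} to a quadratic in $\sigma$, compute that \eqref{eq:lemc1} and \eqref{eq:lemc2} force the constant term to be non-negative and the leading coefficient to be non-positive (or conversely), and conclude that a real root exists --- the paper via non-negativity of the discriminant \eqref{eq:discriminant}, you via the intermediate value theorem, which is a cosmetic difference. The degenerate configuration $A=B=0\neq C$ that you flag is equally unaddressed by the paper's discriminant argument, so your proposal is not missing anything the paper supplies.
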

 \begin{proof}
   If we write $d_1\left( \frac{r_1}{\lambda}( \lambda \sigma - G_1)\right)$, $ d_2\left( \frac{r_2}{\lambda}( \lambda \sigma - G_1) \right)$ and $-d_3(\sigma)$ as polynomials in $\sigma$ with coefficients $d_{k0}$, $d_{k1}$ and $d_{k2}$ for $k\in\{1,2,3\}$ we observe that the discriminant of \eqref{eq:2to1final} takes the form
   \begin{equation}\label{eq:discriminant}
     -4(G_0 +d_{10} + d_{20} + d_{30}) (d_{12} + d_{22} + d_{32}) + (d_{11} + d_{21} + d_{31})^2.
   \end{equation}
   A computation of the coefficients then shows that \eqref{eq:lemc1} ensures that $G_0 +d_{10} + d_{20} + d_{30}$ is non-negative and \eqref{eq:lemc2} ensures that $d_{12} + d_{22} + d_{32}$ is non-positive. Hence, the conditions imply non-negativity of the discriminant. 
 \end{proof}
 \begin{remark}
   The signs of $G_0$ and $G_1$ are not known a priori. In the context of the relaxed scheme introduced in the next section it holds $G_0=G_1$, which allows for better control over sign of the right hand side in \eqref{eq:lemc1}. Even in cases, in which the assumptions of Lemma~\ref{lemma} are not satisfied an analysis of \eqref{eq:discriminant} shows that at least one real solution exists if
   \[
     r_1 (f^1)^\prime(\rho^1_0) + r_2 (f^2)^\prime(\rho^2_0) + (f^2)^\prime(\rho^2_0)
   \]
   does not vanish and both, $|G_0|$ and $|G_1|$ are sufficiently small.
 \end{remark}
 \subsection{An influx ratio preserving Riemann solver for the unrelaxed model}\label{sec:2to1rs}
 In this section we discuss a Riemann solver for \eqref{eq:lwr} on the junction in Fig.~\ref{fig:2to1}. As in this setting demand and supply conditions need to be imposed to obtain an admissible entropy solution we make the following assumptions about the prioritization of the coupling conditions.
 \begin{assumption} Flow maximization \eqref{eq:maxflow} under demand and supply conditions \eqref{eq:ds} is prioritized over the ratio preserving condition \eqref{eq:influxratiopreservation}.
 \end{assumption}
 For simplicity we consider here a Riemann solver mapping the traces at the junction to coupling fluxes at the interface, i.e.,
 \begin{equation}\label{eq:rs2}
\mathcal{RS}^*: (\rho_o^1, \rho_o^2, \rho_o^3) \mapsto (f_R^1, f_R^2, f_L^3).
\end{equation}
Indeed, this formulation is equivalent to the more common one, which maps to coupling states, cf.~\cite{herty2022datadrivenmodel}.

Given the states $\rho_0^1$, $\rho_0^2$ and $\rho_0^3$ the coupling fluxes are determined by the following procedure:
There are two cases that can occur.
In the case of free flow, where $ d_1(\rho_0^1) + d_2(\rho_0^2) \leq s_3(\rho_0^3)$, we take $f^1_R = d_1(\rho_0^1)$, $f^2_R = d_2(\rho_0^2)$ and $f^3_L= d_1(\rho_0^1) + d_2(\rho_0^2)$. This choice satisfies the demand and supply condition and maximizes the flow in the junction.
In the complimentary congestion case we set $f^3_L=s_3(u_0^3)$ and employ the influx ratios to set $f_R^1=r_1 s_3(u_0^3)$ and $f_R^2=r_2 s_3(u_0^3)$. If this leads to a violation of \eqref{eq:ds} for either $f^1_R$ or $f^2_R$ the affected coupling flux is chosen as the respective upper bound and its counterpart is computed from the Kirchhoff condition.

\section{Numerical experiments}
In this section we consider numerical experiments for the LWR model on the merging junction shown in Fig~\ref{fig:2to1}. In particular, we employ the influx ratio preserving coupling condition and compare the Riemann solver based on the entropy admissible coupling condition for the nonlinear system \eqref{eq:rs2} to the relaxation based Riemann solver \eqref{eq:rs}.

We employ a variant of the scheme from \cite{herty2023centr} that has been derived by taking an asymptotic preserving numerical method for the networked relaxation system \eqref{eq:relaxation} to its relaxation limit. This way the scheme naturally offers a discretization of the networked LWR model employing the relaxation based Riemann solver. The roads and their parametrizations that for simplicity we assume to be $(-1, 0)$ on the incoming and $(0, 1)$ on the outgoing road are discretized over $M$ equidistant cells of length $\Delta x$. By $\rho_j^{k,n}$ we denote an approximate average of $\rho^k$ over the cell $I_j=[(j-1/2)\Delta x, (j+1/2)\Delta x]$ at time instance $t_n$. In addition we introduce the time increment $\Delta t$ satisfying the CFL condition  $\Delta t = \text{CFL}\, \frac{\Delta x}{\lambda}$. The scheme then has the conservative form 
\begin{equation}\label{eq:conservativeformnet}
  \rho_j^{k,n+1} = \rho_j^{k,n} - \frac{\Delta t }{\Delta x} \left( F_{j+1/2}^{k,n} - F_{j-1/2}^{k,n}\right).
\end{equation}
The index $j$ in~\eqref{eq:conservativeformnet} is taken $j= -M,\dots, -1$ for $k=1,2$ or $j= 0, 1, \dots, M$ for $k=3$. The numerical fluxes are given by
\begin{equation}\label{eq:netfluxes}
  F_{j-1/2}^{k,n} =
  \begin{cases}
    \frac 1 2 \, (f^k(\rho_{j}^{k,n}) + f^k(\rho_{j-1}^{k,n}))  - \frac {\lambda} 2 (\rho_j^{k,n}-\rho_{j-1}^{k,n}) & \text{if }j \neq 0\\[5pt]
    f_R^{k,n} &\text{if }j=0\text{ and }k\in\{1,2\}\\[5pt]
    f_L^{3,n} &\text{if }j=0\text{ and }k=3 
  \end{cases},
\end{equation}
where the coupling fluxes are obtained applying the Riemann solver~\eqref{eq:rs} to the numerical trace states:
\[
  (\rho_R^{1,n}, f^{1,n}_R, \rho_R^{2, n}, f^{1,n}_R, \rho_L^{3,n}, f^{3,n}_R) = \mathcal{RS}(\rho^{1,n}_{-1}, f^1(\rho^{1,n}_{-1}), \rho^{2,n}_{-1},  f^2(\rho^{2,n}_{-1}), \rho^{3,n}_0,  f^3(\rho^{3,n}_{0})).
\]
Alternatively, the nonlinear Riemann solver introduced in Section~\ref{sec:2to1rs} can be used in the scheme by taking
\[
   (f^{1,n}_R,  f^{1,n}_R, f^{3,n}_R) = \mathcal{RS}^*(\rho^{1,n}_{-1},  \rho^{2,n}_{-1}, \rho^{3,n}_0).
 \]
In our numerical experiments we employ $M=1000$ mesh cells per road.
 
 \begin{figure}[ht]
   \centering
   \includegraphics[width=.8\linewidth]{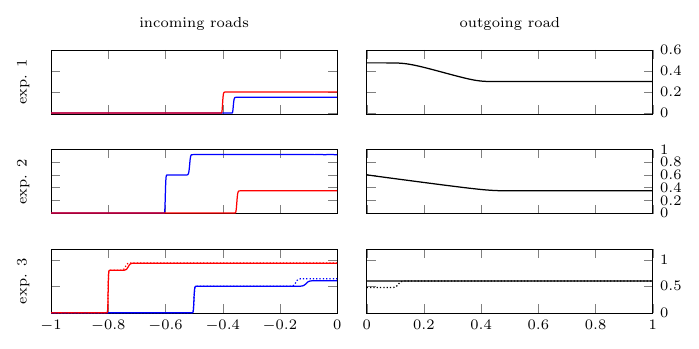}
   \caption{Numerical solutions to Experiments 1--3 showing the incoming vehicle densities $\rho^1$ (blue lines) and $\rho^2$ (red lines) on the left and the outgoing vehicle density $\rho^3$ on the right at the final time ($T=0.75$ in Experiment~1 and $T=1$ in Experiments 2 and 3). In Experiment~3 the solution obtained by the relaxation based Riemann solver is shown using dotted lines.}\label{fig:experiments}
 \end{figure}

 We consider three numerical experiments with results shown in Fig.~\ref{fig:experiments}. The basic setting is adopted from \cite{herty2023centr}: two roads of the same size merge into a third one with slightly higher capacity, which is reflected in the velocity functions
 \[
   V^1(\rho) = V^2(\rho) = 1- \rho, \qquad V^3(\rho) = 1 - \frac{\rho}{1.2}. 
   \]
   We use Riemann initial data and impose outgoing boundary condition, i.e., zero-flux conditions for the incoming roads and homogeneous Neumann boundary conditions for the outgoing road. In Experiment~1 we set $\rho^{1,0} \equiv 0.15$, $\rho^{2,0} \equiv 0.2$ and $\rho^{3,0} \equiv 0.3$. The setup results in free flow of the vehicles through the junction as the outgoing road has sufficient capacity for the incoming traffic.  This behavior is recovered by our numerical simulations using both Riemann solvers with results shown in the first row of Figure~\ref{fig:experiments}. In Experiment~2 we initially set $\rho^{1,0} \equiv 0.6$, $\rho^{2,0} \equiv 0.35$ and $\rho^{3,0} \equiv 0.35$, which causes congestion in the junction. In the numerical results we see a backwards propagating congestion wave forming on road 1, while traffic from road 2 freely passes through the junction. In both, Experiments 1 and 2 both Riemann solvers yield the same numerical results; the relative difference of the corresponding numerical results at the final time has reached machine accuracy. We emphasize that the relaxation approach yields the expected dynamics without flow maximization.

For Experiment~3 we consider another congestion scenario, for which we choose  $\rho^{1,0} \equiv 0.5$, $\rho^{2,0} \equiv 0.8$ and $\rho^{3,0} \equiv 0.6$. Note that those initial densities yield maximal flow on road 1 and road 3. In the numerical results we see congestion waves forming on both, road 1 and road 2. In this experiment our two Riemann solvers yield different results. In particular, the relaxation based Riemann solver predicts a small drop in vehicle densities on road 3 close to the junction and thus does not maximize the flow through the junction. The new Riemann solver thus introduces inefficiencies in traffic flow in case of complex congestion cases at the junction as can be also expected in real traffic. We note that throughout our numerical simulation the vehicle densities have stayed within $[0, \rho^k_{max}]$ at the respective roads and no boundary layers have been obtained.  
 %%%%%%%%%%%%%%%%%%%%%%%%%%%%%% 

% Use this code if you wish to generate your bibliography with BibTeX;
% please replace first the string "demo" below with the name(s) of
% the BibTeX data base(s) you want to use.
% The resulting bibliography-output (the contents of the .bbl file)
% must be pasted into this file before submission.
% 
\bibliographystyle{abbrvurl}
\bibliography{references.bib}

\begin{thebibliography}{10}

\bibitem{borsche2018kinet}
R.~Borsche and A.~Klar.
\newblock Kinetic layers and coupling conditions for scalar equations on
  networks.
\newblock {\em Nonlinearity}, 31(7):3512--3541, 2018.
\newblock \href {https://doi.org/10.1088/1361-6544/aabc91}
  {\path{doi:10.1088/1361-6544/aabc91}}.

\bibitem{borsche2018}
R.~Borsche and A.~Klar.
\newblock A nonlinear discrete velocity relaxation model for traffic flow.
\newblock {\em SIAM J. Appl. Math.}, 78(5):2891--2917, 2018.
\newblock \href {https://doi.org/10.1137/17M1152681}
  {\path{doi:10.1137/17M1152681}}.

\bibitem{bressan2014flowsnetwor}
A.~Bressan, S.~Čanić, M.~Garavello, M.~Herty, and B.~Piccoli.
\newblock Flows on networks: Recent results and perspectives.
\newblock {\em EMS Surv. Math. Sci.}, 1(1):47--111, 2014.
\newblock URL: \url{http://www.ems-ph.org/doi/10.4171/EMSS/2}, \href
  {https://doi.org/10.4171/EMSS/2} {\path{doi:10.4171/EMSS/2}}.

\bibitem{coclite2005traffflowroadnetwor}
G.~M. Coclite, M.~Garavello, and B.~Piccoli.
\newblock Traffic flow on a road network.
\newblock {\em SIAM J. Math. Anal.}, 36(6):1862--1886, jan 2005.
\newblock \href {https://doi.org/10.1137/S0036141004402683}
  {\path{doi:10.1137/S0036141004402683}}.

\bibitem{garavello2006traffflownetwor}
M.~Garavello and B.~Piccoli.
\newblock {\em Traffic Flow on Networks: Conservation Law Models}.
\newblock Number Vol. 1 in {{AIMS}} Series on Applied Mathematics. {American
  Inst. of Mathematical Sciences}, {Springfield, Mo}, 2006.

\bibitem{greenshields1935}
B.~Greenshields, J.~Bibbins, W.~Channing, and H.~Miller.
\newblock A study of traffic capacity.
\newblock {\em Highway Research Board proceedings}, 1935:448--477, 1935.

\bibitem{gottlich2021seconordertraff}
S.~Göttlich, M.~Herty, S.~Moutari, and J.~Weissen.
\newblock Second-{{Order Traffic Flow Models}} on {{Networks}}.
\newblock {\em SIAM J. Appl. Math.}, 81(1):258--281, Jan. 2021.
\newblock URL: \url{https://epubs.siam.org/doi/10.1137/20M1339908}, \href
  {https://doi.org/10.1137/20M1339908} {\path{doi:10.1137/20M1339908}}.

\bibitem{herty2022datadrivenmodel}
M.~Herty and N.~Kolbe.
\newblock {Data-Driven Models for Traffic Flow at Junctions}.
\newblock submitted preprint, 2022.

\bibitem{herty2023centrschemtwo}
M.~Herty, N.~Kolbe, and S.~Müller.
\newblock A central scheme for two coupled hyperbolic systems.
\newblock {\em Communications on Applied Mathematics and Computation}, Nov.
  2023.
\newblock URL: \url{http://dx.doi.org/10.1007/s42967-023-00306-5}, \href
  {https://doi.org/10.1007/s42967-023-00306-5}
  {\path{doi:10.1007/s42967-023-00306-5}}.

\bibitem{herty2023centr}
M.~Herty, N.~Kolbe, and S.~Müller.
\newblock Central schemes for networked scalar conservation laws.
\newblock {\em Netw. Heterog. Media}, 18(1):310--340, 2023.
\newblock \href {https://doi.org/10.3934/nhm.2023012}
  {\path{doi:10.3934/nhm.2023012}}.

\bibitem{holden1995}
H.~Holden and N.~H. Risebro.
\newblock A mathematical model of traffic flow on a network of unidirectional
  roads.
\newblock {\em SIAM J. Math. Anal.}, 26(4):999--1017, 1995.
\newblock \href {https://doi.org/10.1137/S0036141093243289}
  {\path{doi:10.1137/S0036141093243289}}.

\bibitem{jin1995relaxschemsystem}
S.~Jin and Z.~Xin.
\newblock The relaxation schemes for systems of conservation laws in arbitrary
  space dimensions.
\newblock {\em Comm. Pure Appl. Math.}, 48(3):235--276, 1995.
\newblock URL:
  \url{https://onlinelibrary.wiley.com/doi/10.1002/cpa.3160480303}, \href
  {https://doi.org/10.1002/cpa.3160480303} {\path{doi:10.1002/cpa.3160480303}}.

\bibitem{lebacque1996godun}
J.~Lebacque.
\newblock The {G}odunov scheme and what it means for first order traffic flow
  models.
\newblock In J.~Lesort, editor, {\em Transportation and traffic theory:
  proceedings of the 13th Internaional symposium on transportation and traffic
  theory}, pages 647--677, 1996.

\bibitem{lighthill1955kinemwavesii}
M.~J. Lighthill and G.~B. Whitham.
\newblock On kinematic waves {{II}}. {{A}} theory of traffic flow on long
  crowded roads.
\newblock {\em Proc. R. Soc. Lond. A}, 229(1178):317--345, May 1955.
\newblock URL: \url{https://royalsocietypublishing.org/doi/10.1098/
  rspa.1955.0089}, \href {https://doi.org/10.1098/rspa.1955.0089}
  {\path{doi:10.1098/rspa.1955.0089}}.

\bibitem{newell1993ii}
G.~Newell.
\newblock A simplified theory of kinematic waves in highway traffic, part ii:
  Queueing at freeway bottlenecks.
\newblock {\em Transp. Res. Part B Methodol.}, 27(4):289--303, 1993.
\newblock \href {https://doi.org/10.1016/0191-2615(93)90039-D}
  {\path{doi:10.1016/0191-2615(93)90039-D}}.

\bibitem{richards1956shockwaveshighw}
P.~I. Richards.
\newblock Shock {{Waves}} on the {{Highway}}.
\newblock {\em Operations Research}, 4(1):42--51, Feb. 1956.
\newblock URL: \url{http://pubsonline.informs.org/doi/10.1287/opre.4.1.42},
  \href {https://doi.org/10.1287/opre.4.1.42} {\path{doi:10.1287/opre.4.1.42}}.

\end{thebibliography}
\end{document}